\documentclass[12pt]{amsart}
\usepackage{amsfonts}
\usepackage{amsmath}
\usepackage{amssymb}
\usepackage{amsthm}
\usepackage[margin=1.2in]{geometry}
\usepackage{xcolor, hyperref} 
\usepackage[italic]{derivative}

\newtheorem{theorem}{Theorem}[section]

\newtheorem{lemma}[theorem]{Lemma}

\theoremstyle{definition}
\newtheorem{definition}[theorem]{Definition}
\newtheorem{remark}[theorem]{Remark}

\newcommand{\R}{\mathbb{R}}

\numberwithin{equation}{section}

\begin{document}
\title[Wiener-Ikehara theorem and Chebyshev bounds for Beurling primes]{A Wiener-Ikehara type theorem and its application to Chebyshev bounds for Beurling primes}

\author[Y.~Tranoy] {Yarne Tranoy}
\address{Y.~Tranoy\\  Department of Mathematics and Data Science \\ Vrije Universiteit
Brussel\\  Pleinlaan 2\\ 1050 Brussel\\ Belgium}
\email{yarne.tranoy@vub.be}

\author[J.~Vindas]{Jasson Vindas} 
\thanks{The work of J. Vindas was supported by Ghent University through the grant
bof/baf/4y/2024/01/155} 
\address{J.~Vindas \\Department of Mathematics: Analysis, Logic and Discrete Mathematics\\ Ghent University\\
  Krijgslaan 297\\ 9000 Ghent\\ Belgium} 
\email{jasson.vindas@UGent.be}

\subjclass[2020]{11M45, 11N80}
\keywords{Wiener-Ikehara theorem; complex Tauberian theorems; Chebyshev bounds; Beurling generalized primes and integers} 

\begin{abstract} We provide a new version of the Wiener-Ikehara theorem where one deduces bounds
$$
0< \liminf_{x\to\infty} \frac{S(x)}{e^{x}}\leq \limsup_{x\to\infty} \frac{S(x)}{e^{x}} <\infty
$$
 for  (in particular) a non-decreasing function $S$ from a mild hypothesis on the boundary behavior of its Laplace transform on a vertical segment containing $s=1$. As an application, we establish new criteria for the validity of Chebyshev bounds for Beurling generalized prime number systems under weaker conditions than were known so far.

\end{abstract}

\maketitle

\begin{center}

\emph{Dedicated to J\'{a}nos Pintz on the occasion of his 75th birthday }

\end{center}

\bigskip

\section{Introduction}  \label{Sec Intro}
In its simplest form, the Wiener-Ikehara theorem states that if a non-decreasing function $S$ on $[0,\infty)$ has convergent Laplace transform $\int_{0}^{\infty} e^{-sx}S(x)\mathrm{d}x$ on the half-plane $\Re e \:s>1$ and if there is a constant $a\in \mathbb{R}$ such that \begin{equation}
\label{eq: Laplace extension W-I}
G(s)= \mathcal{L}\{S; s\}-\frac{a}{s-1}
\end{equation}
admits analytic extension across $\Re e\: s=1$, then $S$ has asymptotic behavior
\begin{equation}
\label{eq: asymp S W-I}
S(x)\sim a e^{x}, \qquad x\to\infty.
\end{equation}
This celebrated complex Tauberian theorem has been extensively studied over the last century and has been generalized in many ways; see the monograph  \cite[Chapter III]{korevaarbook} for a classical account on the Wiener-Ikehara theorem and  \cite{B-D-V2021,C-V2026,D-V2016,korevaar2005,revesz-roton,zhang2014} for more recent contributions. 

The most general versions of the Wiener-Ikehara theorem leading to \eqref{eq: asymp S W-I} involve local pseudofunction boundary behavior. The pseudofunction approach was initiated by Korevaar in \cite{korevaar2005} in order to take the boundary requirements on the Laplace transform to a minimum; see Section \ref{sect preliminaries} below for precise definitions and some background material on local pseudofunctions (and psedomeasures). The following exact form of the Wiener-Ikehara theorem from \cite{D-V2016} supplies a complete characterization of the asymptotic formula \eqref{eq: asymp S W-I}. In addition to local pseudofunction boundary behavior, it makes use of log-linear slow decrease as Tauberian hypothesis. A function $S$ is called \emph{log-linearly slowly decreasing} (at $\infty$) if for each $\varepsilon>0$ one can find $\delta,x_0>0$ such that
\[
\frac{S(y) - S(x)}{e^{x}}\geq -\varepsilon \qquad \mbox{for } x \leq y \leq x+\delta \mbox{ and } x\geq x_{0}.
\]

\begin{theorem}[{Exact Wiener-Ikehara theorem {\cite[Theorem 3.6]{D-V2016}}}] \label{th exact W-I} Let $S\in L^{1}_{loc}[0,\infty)$. Then, the asymptotic relation \eqref{eq: asymp S W-I} holds if and only if $S$ is log-linearly slowly decreasing, its Laplace transform is  convergent for $\Re e\: s>1$, and the analytic function $G$ given by \eqref{eq: Laplace extension W-I} has local pseudofunction boundary behavior on the line $\Re e \:s=1$.
\end{theorem}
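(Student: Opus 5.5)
We split into the two directions and reduce everything to the function $\tau(x)=e^{-x}S(x)-a$ (extended by $0$ for $x<0$), whose Fourier transform is the boundary value of $G(1+it)$. Indeed, for $\Re e\: s=1+\sigma$, $\sigma>0$, we have
\[
G(s)=\int_0^\infty e^{-\sigma x}e^{-itx}\bigl(e^{-x}S(x)-a\bigr)\,\mathrm{d}x+\frac{a}{s}\cdot 0+\text{(entire correction)},
\]
more precisely $\mathcal{L}\{S;s\}-a/(s-1)=\mathcal{L}\{e^{-x}S(x)-a;\,s-1\}$. So local pseudofunction boundary behavior of $G$ on $\Re e\: s=1$ means exactly that $\hat{\tau}$ (as a distributional limit of $\widehat{e^{-\sigma x}\tau}$ as $\sigma\to0^+$) is a local pseudofunction on $\R$.

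\emph{Necessity.} Assume $S(x)\sim ae^{x}$. Then $\tau(x)=o(1)$, so $\tau$ is bounded and tends to $0$; convergence of $\mathcal{L}\{S;s\}$ for $\Re e\: s>1$ is immediate. Log-linear slow decrease follows from $(S(y)-S(x))e^{-x}=e^{y-x}(a+o(1))-(a+o(1))\geq -\varepsilon$ for $y-x\le\delta$ small. For the boundary behavior, take a test function $\phi\in\mathcal{D}(\R)$. Then $\langle \hat\tau,\phi\rangle$ is $\int\tau(x)\hat\phi(x)\,\mathrm{d}x$ (justified since $\tau\in L^\infty$ and $e^{-\sigma x}\tau\to\tau$ in $\mathcal{S}'$). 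The local pseudofunction property amounts to $\widehat{\phi\hat\tau}(h)\to0$ as $|h|\to\infty$, i.e.\ $\int \tau(x)\hat\phi(x-h)\,\mathrm{d}x\to0$. For $h\to+\infty$ this follows from $\tau\to0$ by dominated convergence, and for $h\to-\infty$ from $\tau=0$ on $(-\infty,0)$ together with the rapid decay of $\hat\phi$.

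\emph{Sufficiency.} This is the substantial direction. First we show that $\tau$ is bounded from below and then bounded, using slow decrease plus the Laplace-transform information; boundedness is the main obstacle. The plan is as follows. Slow decrease gives $S(y)\ge S(x)-\varepsilon e^{x}$ on windows of length $\delta$. Iterating this over an interval of length $\lambda$ gives a one-sided comparison $\tau(x+u)\ge e^{-u}\tau(x)-C$ for $0\le u\le \lambda$. If $\tau$ were unbounded above along $x_n$, integrating $\tau$ against a nonnegative kernel $\hat\phi\ge0$ with $\phi$ supported near $0$ would give $\int\tau(x)\hat\phi(x-x_n)\,\mathrm{d}x\to\infty$, using that $\tau\ge -C$ everywhere (lower boundedness comes from convergence of the Laplace transform at real $s>1$, combined with slow decrease). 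This contradicts the fact that this quantity equals $\widehat{\phi\hat\tau}$ up to a bounded error and hence tends to $0$.

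Once $\tau\in L^\infty$, we have $\tau*\psi(h)\to0$ as $h\to\infty$ for every $\psi$ with $\hat\psi\in\mathcal{D}$, by the pseudofunction hypothesis. Since $\{\psi:\hat\psi\in\mathcal{D}\}$ is dense in $L^1$ and $\tau$ is bounded, Wiener's argument upgrades this to $\tau*\psi(h)\to0$ for all $\psi\in L^1$. Finally we choose $\psi\ge0$ with $\int\psi=1$ supported in $[-\eta,0]$ (respectively $[0,\eta]$). Slow decrease gives $\tau(h)\le\tau*\psi(h)+\varepsilon'$ and, with the reversed support, $\tau(h)\ge\tau*\psi(h)-\varepsilon'$. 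Hence $\limsup|\tau(h)|\le\varepsilon'$, and letting $\varepsilon'\to0$ yields $S(x)\sim ae^{x}$.
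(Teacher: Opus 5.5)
The paper does not prove this theorem; it quotes it from \cite{D-V2016}, so there is no in-paper proof to compare against. Your outline is the standard route and has the same two-stage structure as the paper's proof of Theorem~\ref{the Tth}. First you get $S(x)=O(e^{x})$. Then you use $\widehat{\tau}=g\in PF_{loc}(\R)$ to get $\tau\ast\psi(h)\to0$ for $\widehat{\psi}\in\mathcal{D}(\R)$, extend this to all $\psi\in L^{1}(\R)$ by density and boundedness of $\tau$, and finish with one-sided averaging kernels and slow decrease. The necessity direction and these last steps of sufficiency are fine as written.

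\textbf{Gap in the boundedness step.} You assert that $\int\tau(x)\widehat{\phi}(x-x_n)\,\mathrm{d}x$ ``equals $\widehat{\phi\widehat{\tau}}$ up to a bounded error''. At that stage $\tau$ is not known to be tempered, so the identity $\langle\widehat{\tau},\phi e^{iht}\rangle=\int\tau(x)\widehat{\phi}(x-h)\,\mathrm{d}x$ is not available. The hypothesis only gives $G(1+\sigma+it)\to g$ in $\mathcal{D}'(\R)$, and linking $g$ to integrals of $\tau$ is precisely what must be proved here. You need an Abelian limit argument.

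\textbf{How to close it.} Iterating slow decrease alone (as in Lemma~\ref{lemma log-linear slow decrease}) gives $\tau\geq -C$ on some $[x_0,\infty)$; the Laplace transform is not needed for this, contrary to what you wrote. Its real role is different: convergence of $\mathcal{L}\{S;1+\sigma\}$ at real points, combined with this lower bound, gives $e^{-\sigma x}\tau\in L^{1}$. Fubini then yields
\[
\int_{0}^{\infty}\tau(x)e^{-\sigma x}\widehat{\phi}(x-h)\,\mathrm{d}x=\int_{-\infty}^{\infty}G(1+\sigma+it)\phi(t)e^{iht}\,\mathrm{d}t,\qquad \sigma>0.
\]
Now let $\sigma\to0^{+}$. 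The right side tends to $\langle g(t),\phi(t)e^{iht}\rangle$. On the left, since $\widehat{\phi}\geq0$, monotone convergence applies to $(\tau+C)\widehat{\phi}(\cdot-h)$ on $[x_0,\infty)$, and dominated convergence handles the remaining pieces. Hence $\int_0^\infty\tau(x)\widehat{\phi}(x-h)\,\mathrm{d}x$ is finite and equals $\langle g(t),\phi(t)e^{iht}\rangle=o(1)$ exactly, with no error term. Your contradiction with $\tau(x_n)\to\infty$ then goes through.

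\textbf{Shortcut.} You can avoid redoing this. Log-linear slow decrease implies log-linear bounded decrease, and $\mathcal{L}\{S;s\}=G(s)+a/(s-1)$ has pseudomeasure boundary behavior at $s=1$, because $a/(s-1)$ has boundary value $a\widehat{H}\in PM(\R)$. Lemma~\ref{thboundedness} then gives $S(x)=O(e^{x})$ directly, which is exactly how the proof of Theorem~\ref{the Tth} begins. After modifying $S$ on a compact set (which also repairs your ``$\tau\geq -C$ everywhere''), $\tau$ is bounded. Then $e^{-\sigma x}\tau\to\tau$ in $\mathcal{S}'(\R)$, so $\widehat{\tau}=g$ is legitimate and the rest of your argument stands.
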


The first aim of this paper is to establish a new Wiener-Ikehara type theorem that yields lower and upper bounds
\begin{equation}
\label{eq: l-u bounds W-I} e^{x}\ll S(x)\ll e^{x},
\end{equation}
instead of the classical stronger conclusion \eqref{eq: asymp S W-I}. (Here $\ll$ stands for Vinogradov's symbol, that is, $f(x)\ll g(x)$ means that $f(x)=O(g(x))$, $x\to\infty$.) As observed in \cite[p.~581]{D-V2018}, a small adjustment in the Graham-Vaaler finite form of the Wiener-Ikehara theorem \cite[Theorem 10, p.~294]{G-V1981} proves that \eqref{eq: l-u bounds W-I} holds whenever $S$ is non-decreasing and its Laplace transform has local pseudofunction boundary behavior on an open boundary line segment containing the point $s=1$. The main feature of the Tauberian theorem we shall obtain in this article is that it is capable of delivering \eqref{eq: l-u bounds W-I} under much milder boundary requirements on the Laplace transform than local pseudofunction behavior at $s=1$. 

Our novel version of the Wiener-Ikehara theorem (Theorem \ref{the Tth}) is stated and proved in Section \ref{sect Tauberian theorem}. It is inspired by both Diamond and Zhang's \cite[Chapter 11]{diamond-zhangbook} and Vindas' \cite{VindasChebyshevI,VindasChebyshevII}  analytic approaches to Chebyshev bounds for Beurling generalized prime number systems. 
In fact, the second aim of this work is to simplify the treatment of Chebyshev bounds through an explicit Tauberian perspective. Naturally, as it is the case with most Tauberian theorems, Theorem \ref{the Tth} might be expected to have potential applications beyond analytic number theory. 

Let $N$ and $\pi$ be the integer and prime counting functions of a Beurling generalized number system \cite{beurling1937,diamond-zhangbook}. We shall show the following criterion for the validity of Chebyshev bounds:

\begin{theorem}\label{Th Chebyshev bounds} Suppose that 
\begin{align}
\label{eq N intro}
N(x)= x &\log^{r}x \ \bigg(a  +\sum_{\nu=1}^{m}a_\nu\cos(\beta_\nu+\theta_{\nu} \log x)\bigg)  \\
 \nonumber
&+x\sum_{j=1}^{k}\log ^{r_j }x  \sum_{\nu=1}^{m_j}P_{\nu,j}(\log \log x)\cos(\beta_{\nu,j}+\theta_{\nu,j} \log x)   +E(x), \quad x>e,
\end{align}
 where 
\begin{equation}
\label{Ebound1}
a>0, \qquad r>-1, \qquad r>r_1>\dots> r_{k}, \qquad  \theta_{\nu}\neq0, 
\end{equation}
the $P_{\nu,j}$ are polynomials, and the function $E$ satisfies
\begin{equation}
\label{Ebound2}
    \int_e^\infty \frac{|E(u)|}{u^{2}} \mathrm{d}u<\infty
\end{equation}
and
\begin{equation}
\label{Ebound3}
    \int_e^x  \frac{E(u)\log u}{u}\: \mathrm{d}u \ll x.
\end{equation}
Then, the generalized primes satisfy the Chebyshev bounds, that is,
\begin{equation}\label{eq Chebyshev bounds}
\frac{x}{\log x}\ll \pi(x)\ll \frac{x}{\log x}\: .
\end{equation}
\end{theorem}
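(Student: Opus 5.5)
Chebyshev bounds for $\pi$ are equivalent to $x\ll\psi(x)\ll x$, where $\psi$ is the Chebyshev function of the system. It therefore suffices to show $e^{x}\ll S(x)\ll e^{x}$ for the non-decreasing function $S(x)=\psi(e^x)$. The plan is to feed $S$ into the Tauberian theorem (Theorem \ref{the Tth}), which is stated later in the paper. We assume it takes the form: for non-decreasing $S$ with Laplace transform $\mathcal{L}\{S;s\}=-\zeta'(s)/(s\zeta(s))$ convergent for $\Re e\, s>1$, a suitably mild boundary condition near $s=1$ on a vertical segment yields \eqref{eq: l-u bounds W-I}. We expect that condition to be of the type
\[
\log\zeta(\sigma+it) = \log\frac{1}{\sigma-1}+O(1)\quad\text{(or a bound on } \Re e \text{ and a pseudomeasure-type control)},
\]
uniformly for $|t|\le\lambda$ as $\sigma\to1^+$. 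If its exact hypothesis is of pseudomeasure or boundedness type, the plan adapts to whatever quantity it requires. The task is thus to analyse $\zeta(s)=\int_{1^-}^\infty u^{-s}\,dN(u)$ near $s=1$ using \eqref{eq N intro}.

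\textbf{Step 1 (the main terms).} Write $\zeta(s)=s\int_1^\infty N(u)u^{-s-1}\,du$ and compute the Mellin transform of each main term. We have $\int_e^\infty x^{-s}\log^r x\,dx\approx\Gamma(r+1)(s-1)^{-r-1}$. The term with $\cos(\beta_\nu+\theta_\nu\log x)$ contributes $\tfrac12\Gamma(r+1)\bigl(e^{i\beta_\nu}(s-1-i\theta_\nu)^{-r-1}+e^{-i\beta_\nu}(s-1+i\theta_\nu)^{-r-1}\bigr)$ plus an entire remainder. Since $\theta_\nu\neq0$, for $|t|<\lambda:=\tfrac12\min|\theta_\nu|$ these cosine terms are bounded and analytic, whereas $a\,\Gamma(r+1)(s-1)^{-r-1}$ dominates. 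The lower-order terms $x\log^{r_j}x\,P(\log\log x)\cos(\cdot)$ yield derivatives in the exponent $r_j$ of such functions. These have singularities only at $1\pm i\theta_{\nu,j}$. At $s=1$ this happens only if $\theta_{\nu,j}=0$, and even then they are $O(|s-1|^{-r_j-1}\log^{d}(1/|s-1|))=o(|s-1|^{-r-1})$ because $r_j<r$. Hence, near $s=1$ with $|t|\le\lambda$ and $\Re e\, s>1$,
\[
\zeta(s)=a\Gamma(r+1)(s-1)^{-r-1}\bigl(1+o(1)\bigr)+\frac{s}{s-1}\cdot(\text{bounded}) + s\,\mathcal{M}E(s).
\]
Here $\mathcal M E(s)=\int_e^\infty E(u)u^{-s-1}du$ is continuous and bounded on $\Re e\, s\ge1$ by \eqref{Ebound2}. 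Because $r>-1$, the main term dominates all the others, so $\zeta$ is zero-free in a region $\{\Re e\, s>1,|s-1|<\eta\}$. On the rest of the segment, $\eta\le|t|\le\lambda$, one needs a lower bound on $|\zeta|$. Since $(s-1)^{-r-1}$ is bounded there, this is not automatic. I would shrink $\lambda$ to $\eta$, since Theorem \ref{the Tth} only requires a segment containing $s=1$.

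\textbf{Step 2 (the logarithmic derivative).} We need information on $-\zeta'/\zeta=(r+1)/(s-1)+\dots$ We get it from the main terms by differentiation; their derivatives behave similarly, with singularity $O(|s-1|^{-r-2})$. The error term enters through $\frac{d}{ds}\mathcal M E(s)=-\int E(u)\log u\,u^{-s-1}du$. This integral is controlled by integrating by parts against \eqref{Ebound3}. Setting $F(x)=\int_e^x E(u)\log u\,u^{-1}du=O(x)$, we obtain $\int x^{-s}dF(x)=s\int F(x)x^{-s-1}dx=O(1/(\sigma-1))$. Thus $\zeta'(s)$ picks up at most an $O(1/(\sigma-1))$ perturbation. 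This is exactly the kind of weak, non-pseudofunction control that the new Tauberian theorem is designed to tolerate. Dividing by $\zeta\asymp|s-1|^{-r-1}$ gives
\[
-\frac{\zeta'(s)}{\zeta(s)}=\frac{r+1}{s-1}+O\bigl((\sigma-1)^{-1}|s-1|^{r+1}\bigr)+(\text{nicer terms}).
\]

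\textbf{Step 3 (the obstacle).} The main difficulty is matching this estimate to the precise hypothesis of Theorem \ref{the Tth}. Since $r+1>0$, the error term $O((\sigma-1)^{-1}|s-1|^{r+1})$ is $o(1/|s-1|)$ near $s=1$ only along rays, and it is not locally integrable as $\sigma\to1^+$ for fixed $t\ne0$. I would first try to work with $\log\zeta$ rather than $\zeta'/\zeta$. Integrating in $\sigma$ turns the $O(1/(\sigma-1))$ bound on $(\mathcal M E)'$ into a bound $O(\log\frac1{\sigma-1})$ on $\mathcal ME$, and in fact we already know $\mathcal M E=O(1)$. This yields $\log\zeta(s)=(r+1)\log\frac1{s-1}+O(1)$ uniformly on $\{1<\sigma<2,|t|\le\lambda\}$, which is the form of hypothesis I expect Theorem \ref{the Tth} to accept, e.g.\ via a Fejér-kernel argument producing the lower and upper bounds for $\psi(x)/x$. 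Finally, the bounds on $\psi$ transfer to \eqref{eq Chebyshev bounds} by the standard comparison $\pi(x)=\int_{1}^{x}\frac{d\psi(u)}{\log u}+O(\sqrt x)$ and partial summation.
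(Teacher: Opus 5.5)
There is a genuine gap in Steps 2--3. Apart from the main-term analysis, every estimate you actually use follows from \eqref{Ebound2} alone. Indeed, $\mathcal{M}E$ is bounded on $\Re e\, s\ge 1$, and then your bound $(\mathcal{M}E)'(s)=O(1/(\sigma-1))$ also follows from Cauchy's estimate on the disc of radius $\sigma-1$. So \eqref{Ebound3} never enters in an essential way. But the statement without \eqref{Ebound3} is false. Take $r=0$, no oscillatory terms, and $E=N-ax$. Kahane's system, recalled in the paper's closing remark, satisfies $\int_1^\infty|N(x)-ax|x^{-2}\,dx<\infty$ yet violates \eqref{eq: CB}. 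In that system $\zeta(s)-as/(s-1)=s\int_1^\infty(N(x)-ax)x^{-s-1}\,dx$ is bounded, so $\log\bigl((s-1)\zeta(s)\bigr)\to\log a$. Hence your target condition $\log\zeta(s)=(r+1)\log\frac{1}{s-1}+O(1)$ holds there, and no Tauberian theorem can accept it as sufficient. Accordingly, Theorem \ref{the Tth} is not of this pointwise type. It requires the boundary distribution of $G(s)=-\zeta'(s)/(s\zeta(s))-\rho/(s-1)$, $\rho=r+1$, to split near $t=0$ as $g_1g_2+g_3$ with $g_1\in A_{loc}(0)$, $g_1(0)=0$, $g_2\in PM_{loc}(0)$, $g_3\in PF_{loc}(0)$.

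The missing idea is to keep \eqref{Ebound3} as a Fourier-side statement rather than a size bound. Your integration by parts is the right computation, just read differently. Put $f_1(x)=e^{-x}E(e^x)\in L^1(\R)$ and $f_2(x)=e^{-x}\int_e^{e^x}y^{-1}E(y)\log y\,dy\in L^\infty(\R)$. Write $\zeta(s)=c(s-1)^{-\rho}+\sum_jP_j(\log(s-1))(s-1)^{-\rho_j}+F(s)$, where the $\theta\neq0$ terms go into a part $F_2$ of $F$ analytic near $s=1$. Then $F(1+it)\in A_{loc}(0)$ and $F'(1+it)=F_2'(1+it)+\widehat{f}_{1}(t)-(1+it)^2\widehat{f}_{2}(t)\in PM_{loc}(0)$. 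Now set $B(s)=(s-1)^{\rho}\zeta(s)=D(s)+(s-1)^{\rho}F(s)$, with $D(s)=c+\sum_jP_j(\log(s-1))(s-1)^{\rho-\rho_j}$. One has $B(1+it)\in A_{loc}(0)$ and $B(1)=c\neq0$, so $1/B(1+it)\in A_{loc}(0)$ by Lemma \ref{Wiener lemma}, and
\[
\frac{B'}{B}=\frac{D'+\rho(s-1)^{\rho-1}F}{B}+\frac{(s-1)^{\rho}}{B}\,F'.
\]
The first quotient has $L^1_{loc}$ boundary values, since $\rho-1>-1$. The second is $g_1g_2$ with $g_2=F'(1+it)$ and $g_1$ the boundary value of $(s-1)^{\rho}/B(s)$; $g_1$ lies in $A_{loc}(0)$ and vanishes at $t=0$ precisely because $r>-1$. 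Your term $O\bigl((\sigma-1)^{-1}|s-1|^{r+1}\bigr)$ is only the pointwise shadow of this product. In the proof of Theorem \ref{the Tth}, Lemma \ref{lemma korevaar} makes $\langle g_1g_2,e^{iyt}\varphi_n\rangle$ small uniformly in $y$; together with the monotonicity of $\psi$ this gives the lower bound. The upper bound comes from $g\in PM_{loc}(0)$ via Lemma \ref{thboundedness}. Your Step 1 and the passage from $\psi$ to $\pi$ are fine.
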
 

Observe that a simpler condition yielding both \eqref{Ebound2} and \eqref{Ebound3} is $E(x)= O(x/\log^{\alpha}x)$ with $\alpha>1$. Naturally, the integral average hypothesis \eqref{Ebound3} is satisfied if the stronger pointwise bound $E(x)=O(x/\log x)$ holds.  Theorem \ref{Th Chebyshev bounds} generalizes \cite[Theorem 11.1]{diamond-zhangbook}. Furthermore, it contains as particular instances other criteria for Chebyshev bounds available so far in the literature (\cite{beurling1937,diamond3,diamond-zhang13a,kahane98,Revesz94,VindasChebyshevI,VindasChebyshevII, zhang1993,zhang2014}).
 
 We will actually deduce Theorem \ref{Th Chebyshev bounds} in Section \ref{sect Chebyshev bounds} from a still more general result (Theorem \ref{th CB}), which gives sufficient conditions for \eqref{eq Chebyshev bounds} (or equivalently \eqref{eq: CB}) in terms of the boundary behavior of the Beurling zeta function of a generalized number system and its derivative near the point $s=1$.

\section{Preliminaries}\label{sect preliminaries}

This section collects several auxiliary notions and lemmas.

\subsection{Distributions and Fourier transform}
We shall make use of standard Schwartz distribution calculus in our manipulations. Our notation for generalized functions is the same as in \cite{estrada-kanwal,vladimirovbook}.

The standard Schwartz test function spaces of compactly supported smooth functions (on an open subset $I\subseteq \mathbb{R}$)  and rapidly decreasing smooth functions are denoted by $\mathcal{D}(I)$ and $\mathcal{S}(\mathbb{R})$, while $\mathcal{D}'(I)$ and $\mathcal{S}'(\mathbb{R})$ stand for their topological duals, the spaces of distributions and tempered distributions.  We write $\langle f,\varphi\rangle$, or $\langle f(x),\varphi(x)\rangle$ with the use of a dummy variable of evaluation, for the dual pairing between a distribution $f$ and a test function $\varphi$. Accordingly, sometimes we also write expressions like $f(x)\in \mathcal{D}'(\R)$, to be interpreted as $f\in \mathcal{D}'(\R)$. As usual, locally integrable functions are regarded as (regular) distributions via $\langle f(x),\varphi(x)\rangle=\int_{-\infty}^{\infty}f(x)\varphi(x)\mathrm{d}x$.  

We normalize Fourier transforms as $\widehat{\varphi}(t)=\mathcal{F}\{\varphi;t\}=\int_{-\infty}^{\infty}e^{-itx}\varphi(x)\:\mathrm{d}x$, and  interpret them in the sense of tempered distributions when the integral definition does not make sense.
So, if $f\in\mathcal{S}'(\mathbb{R})$, its Fourier transform is the tempered distribution $\widehat{f}\in\mathcal{S}'(\mathbb{R})$ determined by $\langle \widehat{f}(t),\varphi(t)\rangle=\langle f(x), \widehat\varphi(x)\rangle$ for all test functions $\varphi\in \mathcal{S}(\mathbb{R})$. 

\subsection{Pseudofunctions, pseudomeasures, and boundary values}

We denote as $A(\mathbb{R})=\mathcal{F}(L^{1}(\mathbb{R}))$ the global Wiener algebra, its dual $PM(\mathbb{R})=\mathcal{F}(L^{\infty}(\mathbb{R}))$ is the space of global pseudomeasures. We call $f\in PM(\mathbb{R})$ a global pseudofunction if additionally ($\operatorname*{ess}$) $\lim_{|x|\to\infty} \widehat{f}(x)=0$, and write $f\in PF(\mathbb{R})$. Since the Fourier transform sends convolution into multiplication, $A(\mathbb{R})$ is an algebra of continuous functions under pointwise multiplication, while $PM(\mathbb{R})$ and $ PF(\mathbb{R})$ have natural multiplicative $A(\mathbb{R})$-module structures. 

Let $I\subset \mathbb{R}$ be open. Consider $t_0\in I$ and let $X$ stand for $A,PF,$ or $PM$ (or similarly other suitable spaces). 
A Schwartz distribution $g\in\mathcal{D}'(I)$ is said to be an element of $X_{loc}(t_0)$ if there is a neighborhood of $t_0$ in $I$ where $g$ coincides with an element of the global space $X(\mathbb{R})$. We then set 
$$
X_{loc}(I)=\{g\in \mathcal{D}'(I): g\in X_{loc}(t_0),\: \forall t_0\in I\}.
$$
Clearly, $X_{loc}(\cdot)$ is a sheaf. (In fact, a fine sheaf, which can be established exactly as in \cite[``Theorem of piecewise sewing", p.~14]{vladimirovbook} due to the existence of $C^{\infty}$-partitions of unity for instance.)
We term $A_{loc}(I)$ the local Wiener algebra on $I$, and $PF_{loc}(I)$ and $PM_{loc}(I)$ the spaces of local pseudofunctions and pseudomeasures on $I$. One can readily check that local pseudofunctions are characterized by a generalized Riemann-Lebesgue lemma: 

\begin{lemma}[{\cite[Lemma 3.3 and Lemma 3.4]{korevaar2005}}] A distribution
 $g\in \mathcal{D}'(I)$ is a local pseudofunction on $I$ if and only if for each $\varphi\in\mathcal{D}(I)$
\[
\left\langle g(t),e^{iht}\varphi(t)\right\rangle=o(1), \quad |h|\to\infty .
\]
\end{lemma}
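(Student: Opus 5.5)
We prove the two implications separately.

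\emph{Necessity.} Suppose $g\in PF_{loc}(I)$ and fix $\varphi\in\mathcal{D}(I)$. For each $t_0\in\operatorname{supp}\varphi$ there is an open interval $U_{t_0}\ni t_0$ on which $g$ coincides with some $f_{t_0}\in PF(\mathbb{R})$. By compactness, finitely many $U_{t_1},\dots,U_{t_n}$ cover $\operatorname{supp}\varphi$. Choose a smooth partition of unity $\chi_1,\dots,\chi_n$ subordinate to this cover, with $\sum_j\chi_j=1$ on $\operatorname{supp}\varphi$, and set $\varphi_j=\chi_j\varphi\in\mathcal{D}(U_{t_j})$. Then
\[
\langle g(t),e^{iht}\varphi(t)\rangle=\sum_{j=1}^{n}\langle f_{t_j}(t),e^{iht}\varphi_j(t)\rangle ,
\]
so it suffices to treat a single term. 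Write $f=\widehat{F}$ with $F\in L^\infty(\mathbb{R})$ and $F(x)\to0$ as $|x|\to\infty$. By the definition of the Fourier transform on $\mathcal{S}'$,
\[
\langle f(t),e^{iht}\varphi_j(t)\rangle=\langle F(x),\widehat{\varphi_j}(x-h)\rangle=\int_{-\infty}^{\infty}F(x+h)\widehat{\varphi_j}(x)\,\mathrm{d}x .
\]
Here $\widehat{\varphi_j}\in\mathcal{S}(\mathbb{R})\subset L^1(\mathbb{R})$. The integrand tends to $0$ pointwise as $|h|\to\infty$ and is dominated by $\|F\|_\infty|\widehat{\varphi_j}|$, so the integral is $o(1)$ by dominated convergence.

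\emph{Sufficiency.} Suppose the decay condition holds, and fix $t_0\in I$. Choose $\psi\in\mathcal{D}(I)$ with $\psi=1$ on a neighborhood $V$ of $t_0$. The distribution $\psi g$ has compact support, so its inverse Fourier transform is the smooth function
\[
F(x)=\frac{1}{2\pi}\langle g(t),e^{ixt}\psi(t)\rangle ,
\]
and $\widehat{F}=\psi g$. By hypothesis, applied with the test function $\psi$, we have $F(x)\to0$ as $|x|\to\infty$. Since $F$ is also continuous, it is bounded, so $F\in L^\infty(\mathbb{R})$. Hence $\psi g\in PF(\mathbb{R})$, and $\psi g$ coincides with $g$ on $V$. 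Therefore $g\in PF_{loc}(t_0)$, and since $t_0\in I$ was arbitrary, $g\in PF_{loc}(I)$.

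The main technical point is the necessity direction: one has to reduce, via the partition of unity, to finitely many global pseudofunctions, and then check that $h\mapsto\int F(x+h)\widehat{\varphi_j}(x)\,\mathrm{d}x$ is $o(1)$. This uses only that $F$ is bounded with $F(x)\to0$ and that $\widehat{\varphi_j}$ is integrable. The sufficiency direction reduces to the standard fact that a compactly supported distribution has a smooth Fourier transform given by pairing with exponentials (Paley--Wiener--Schwartz).
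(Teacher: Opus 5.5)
Your argument is correct and is the routine verification the paper has in mind; the paper gives no proof of its own, only citing Korevaar and remarking that the characterization is readily checked. Necessity follows from a partition of unity plus the identity $\langle \widehat F, e^{iht}\varphi_j\rangle=\int F(x+h)\widehat{\varphi_j}(x)\,\mathrm{d}x$, and sufficiency from cutting off by $\psi$ and reading the hypothesis as decay of $\mathcal{F}^{-1}(\psi g)$. One cosmetic point: $F$ tends to $0$ only essentially, so either redefine $F$ on a null set first or split the integral over $|x+h|\le R$ and $|x+h|>R$, rather than invoking pointwise convergence.
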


Thus any locally Lebesgue integrable function is an example of a pseudofunction (thanks to the classical Riemann-Lebesgue lemma). 
We have in fact the (strict) inclusions 
\[
C^{\infty}(I)\subset A_{loc}(I)\subset L^{1}_{loc}(I)\subset PF_{loc}(I)\subset PM_{loc}(I). 
\]

It is important to point out that the multiplication on the global spaces induces an $A_{loc}$-module structure on the corresponding local counterparts. We collect this fact in the following lemma.

\begin{lemma}
\label{lemma mult}
If  $g_1\in A_{loc}(I)$ and $g_2\in X_{loc}(I)$, where the space $X$ stands for $PM, PF,$ or $A$, then $g_1 \cdot g_2\in X_{loc}(I)$.
\end{lemma}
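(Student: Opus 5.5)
The claim is local, so I will reduce it to the corresponding statement for the global spaces and then localize using cut-off functions.

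\emph{Global step.} First I show that $A(\mathbb{R})\cdot X(\mathbb{R})\subseteq X(\mathbb{R})$ for $X=A,PM,PF$, with the product defined by duality. Write $g_1=\widehat{u}$ with $u\in L^1(\mathbb{R})$, and write $g_2=\widehat{v}$, where $v\in L^\infty(\mathbb{R})$ for $PM$, $v$ additionally tends essentially to $0$ at infinity for $PF$, and $v\in L^1(\mathbb{R})$ for $A$. Up to the normalization constant coming from the inversion formula, the product $g_1g_2$ is the Fourier transform of the convolution $u*v$. I will check this identity on test functions $\varphi\in\mathcal{S}(\mathbb{R})$ using Fubini's theorem.

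The three cases then follow from standard convolution bounds:
\begin{itemize}
\item If $v\in L^1$, then $u*v\in L^1$, which gives the case $X=A$.
\item If $v\in L^\infty$, then $\|u*v\|_\infty\le\|u\|_1\|v\|_\infty$, which gives the case $X=PM$.
\item If $v(x)\to0$ essentially, then $u*v(x)\to0$ by dominated convergence. Indeed,
\[
u*v(x)=\int u(y)v(x-y)\,\mathrm{d}y,
\]
the integrand tends to $0$ for a.e.\ $y$ as $|x|\to\infty$, and it is dominated by $\|v\|_\infty|u(y)|$. This gives the case $X=PF$.
\end{itemize}
Here I use the known fact that $PM$ acts as the dual of $A$, so that the product $\widehat u\cdot\widehat v$ is a genuine distribution.

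\emph{Local step.} Fix $t_0\in I$. By definition, there is a neighborhood $U_1$ of $t_0$ on which $g_1=G_1$ for some $G_1\in A(\mathbb{R})$. Likewise, there is a neighborhood $U_2$ on which $g_2=G_2$ for some $G_2\in X(\mathbb{R})$. On $U=U_1\cap U_2$ the product satisfies $g_1g_2=G_1G_2$.

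To make sense of this, I must be precise about how $g_1g_2$ is defined as a distribution on $I$. I define it locally: on each such $U$, set it equal to the restriction of $G_1G_2$. This is consistent on overlaps because the product of global objects is local in nature. Concretely, if $G_1=G_1'$ on an open set $V$, then $G_1G_2=G_1'G_2$ on $V$. To see this, take a test function $\varphi\in\mathcal{D}(V)$. Then
\[
\langle (G_1-G_1')G_2,\varphi\rangle=\langle G_2,(G_1-G_1')\varphi\rangle=0,
\]
since $(G_1-G_1')\varphi\equiv0$ and $(G_1-G_1')\varphi$ is an admissible test element. The sheaf property of $\mathcal{D}'$ then glues these local definitions into a well-defined distribution on $I$.

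Once the product is defined, membership follows directly. Near $t_0$, $g_1g_2$ coincides with $G_1G_2$, which lies in $X(\mathbb{R})$ by the global step. Hence $g_1g_2\in X_{loc}(t_0)$ for every $t_0\in I$, so $g_1g_2\in X_{loc}(I)$.

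\emph{Main obstacle.} The substantive points are, first, justifying the Fubini identity $\widehat{u}\,\widehat{v}=c\,\widehat{u*v}$ for $v\in L^\infty$ when $g_2$ is only a tempered distribution. Second, in the localization argument, making sure that multiplying $G_2$ by an $A$-function that vanishes on $V$ really annihilates test functions supported in $V$. This requires pairing $PM$ with $A$ rather than with Schwartz functions, since $G_1-G_1'$ is not smooth. I would handle this by a density argument: approximate elements of $A$ by Schwartz functions in the $A$-norm, and use that pseudomeasures are continuous with respect to the $A$-norm.
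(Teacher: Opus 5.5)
Your route is the one the paper has in mind. The paper gives no separate proof; it only remarks that the $A(\mathbb{R})$-module structures of $A$, $PF$, $PM$, which come from convolution, pass to the local spaces. Your global step, with $\widehat{u}\,\widehat{v}=\widehat{u\ast v}$ checked by Fubini, is fine. For the $PF$ case, either split the convolution integral or modify $v$ on a null set before applying dominated convergence.

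The well-definedness check in the local step is incomplete, and your density argument is attached to the wrong half of it. To glue, you need $G_1G_2=G_1'G_2'$ on $V$ whenever $G_1=G_1'$ and $G_2=G_2'$ on $V$. So besides $(G_1-G_1')G_2$ you must also handle $G_1'(G_2-G_2')$.

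The half you verified needs no approximation at all. Since $G_1,G_1'$ are continuous and agree on $V$, $(G_1-G_1')\varphi$ is literally the zero element of $A(\mathbb{R})$.

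The nontrivial half is the one you omitted:
\[
\langle G_1'(G_2-G_2'),\varphi\rangle=\langle G_2-G_2',\psi\rangle,\qquad \psi=G_1'\varphi .
\]
Here $G_2-G_2'$ is only known to vanish against $\mathcal{D}(V)$, while $\psi$ is merely an element of $A(\mathbb{R})$ with compact support in $V$. Approximating $\psi$ by arbitrary Schwartz functions does not help. You need approximants in $\mathcal{D}(V)$ that converge in the $A$-norm.

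Mollification supplies them. Take $\rho\geq 0$ in $\mathcal{D}(\mathbb{R})$ with $\int\rho=1$ and set $\rho_\varepsilon(t)=\varepsilon^{-1}\rho(t/\varepsilon)$. Then $\psi\ast\rho_\varepsilon\in\mathcal{D}(V)$ for small $\varepsilon$. If $\psi=\widehat{w}$ with $w\in L^{1}(\mathbb{R})$, then $\psi\ast\rho_\varepsilon$ is the Fourier transform of $w(x)\widehat{\rho}(-\varepsilon x)$. This tends to $w$ in $L^{1}(\mathbb{R})$ by dominated convergence, since $|\widehat{\rho}|\leq 1$ and $\widehat{\rho}(0)=1$.

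Since a pseudomeasure is continuous for the $A$-norm, you get
\[
\langle G_2-G_2',\psi\rangle=\lim_{\varepsilon\to0}\langle G_2-G_2',\psi\ast\rho_\varepsilon\rangle=0 .
\]
With this added, your proof is complete.
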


Let $G(s)$ be analytic on the half-plane $\Re e\:s>1$ and let $I\subset \mathbb{R}$ be open. We say that $G$ has distributional boundary values on $1+iI$ if $G$ admits a boundary distribution $g\in\mathcal{D}'(I)$, that is, if 
\begin{equation*}
\lim_{\sigma\to1^{+}}\int_{-\infty}^{\infty}G(\sigma+it)\varphi(t)\mathrm{d}t=\left\langle g(t),\varphi(t)\right\rangle, \qquad \mbox{for each } \varphi\in\mathcal{D}(I).
\end{equation*}
We often write $g(t)=G(1+it)$ for its boundary value distribution. One then says that $G$ has local pseudomeasure boundary behavior at a point $s=1+it_0$ if $g\in PM_{loc}(t_0) $. Likewise, one defines boundary behavior with respect to other local spaces. We emphasize that $L^1_{loc}$-boundary behavior, continuous, or analytic extension are very special cases of local pseudofunction boundary behavior.

\subsection{Two basic lemmas from Wiener's theory} 
The next lemma is very simple but yet lies at the heart of various approaches to Wiener's Tauberian theorem. A proof can be found in \cite{korevaar1965}, but the reader can also easily verify it by himself.

\begin{lemma}[{\cite[Lemma 1]{korevaar1965}}]
\label{lemma korevaar}
Let $f,\phi\in L^{1}(\mathbb{R}) $. Set 
\begin{equation}
\label{mollifier eq}
\phi_{n}(x)=\frac{1}{n}\phi\left(\frac{x}{n}\right), \qquad n=1,2,\dots . 
\end{equation}
Then
\[
\lim_{n\to\infty} \left\|f\ast \phi_n - \widehat{f}(0) \phi_n\right\|_{L^{1}(\mathbb{R})}=0.
\]
\end{lemma}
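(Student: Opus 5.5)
The idea is to write $f\ast\phi_n-\widehat f(0)\phi_n$ as an average of translation differences of $\phi_n$ and use the $L^1$-continuity of translations. Since $\widehat f(0)=\int f$, we have, for almost every $x$,
\[
(f\ast\phi_n)(x)-\widehat{f}(0)\phi_n(x)=\int_{-\infty}^{\infty} f(y)\bigl(\phi_n(x-y)-\phi_n(x)\bigr)\,\mathrm{d}y .
\]
Taking $L^1$-norms in $x$ and applying Fubini--Tonelli gives
\[
\left\|f\ast\phi_n-\widehat f(0)\phi_n\right\|_{L^1(\mathbb{R})}\leq \int_{-\infty}^{\infty}|f(y)|\,\left\|\phi_n(\cdot-y)-\phi_n\right\|_{L^1(\mathbb{R})}\,\mathrm{d}y .
\]

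The change of variables $x=nu$ shows
\[
\left\|\phi_n(\cdot-y)-\phi_n\right\|_{L^1(\mathbb{R})}=\left\|\phi(\cdot-y/n)-\phi\right\|_{L^1(\mathbb{R})}=:\omega(y/n),
\]
where $\omega(h)=\|\phi(\cdot-h)-\phi\|_{L^1}$ is the $L^1$-modulus of continuity of $\phi$. It satisfies $\omega(h)\leq 2\|\phi\|_{L^1}$ for all $h$, and $\omega(h)\to0$ as $h\to0$ by continuity of translation in $L^1$ (approximate $\phi$ by a continuous compactly supported function). Hence, for each fixed $y$, the integrand $|f(y)|\,\omega(y/n)$ tends to $0$ as $n\to\infty$, and it is dominated by $2\|\phi\|_{L^1}|f(y)|\in L^1(\mathbb{R})$. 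Dominated convergence then gives that the right-hand side tends to $0$, which is the claim.

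The only point needing care is the continuity of translation in $L^1$, which is standard. Measurability of $(x,y)\mapsto f(y)\phi_n(x-y)$ for Fubini is also routine. I expect no real obstacle.
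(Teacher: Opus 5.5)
Your proof is correct: the averaging identity, Tonelli, the scaling $\|\phi_n(\cdot-y)-\phi_n\|_{L^{1}}=\|\phi(\cdot-y/n)-\phi\|_{L^{1}}$, and dominated convergence with majorant $2\|\phi\|_{L^{1}}|f(y)|$ are all valid. The paper gives no proof of its own; it cites Korevaar's Lemma 1 and leaves the verification to the reader. Your argument, via continuity of translation in $L^{1}$, is the standard verification of exactly this kind.
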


 Let us recall Wiener's local division lemma, which we rephrase as follows.

\begin{lemma}[{Wiener's division lemma \cite[Theorem 7.3, p.~81]{korevaarbook}}]\label{Wiener lemma} Let $f\in A_{loc}(0)$. If $f(0)\neq 0$, then $1/f \in A_{loc}(0)$.
\end{lemma}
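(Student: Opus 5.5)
The statement to prove is Wiener's division lemma: if $f\in A_{loc}(0)$ and $f(0)\neq 0$, then $1/f\in A_{loc}(0)$. I would prove it with the classical Neumann-series argument, using Lemma \ref{lemma korevaar} to make the perturbation small in the Wiener norm.

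\emph{Normalization and localization.} After multiplying by a constant, assume $f(0)=1$. By definition there is a neighborhood of $0$ on which $f$ coincides with some $F=\widehat{u}$ with $u\in L^{1}(\mathbb{R})$, and then $F(0)=\widehat{u}(0)=1$. Fix $\phi\in L^{1}(\mathbb{R})$ whose Fourier transform $\widehat{\phi}$ is compactly supported, equal to $1$ near $0$, and supported in $[-1,1]$. With $\phi_n$ as in \eqref{mollifier eq}, we have $\widehat{\phi_n}(t)=\widehat{\phi}(nt)$, which equals $1$ on a neighborhood $(-c/n,c/n)$ of $0$.

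\emph{Small perturbation.} Apply Lemma \ref{lemma korevaar} with $u$ in place of $f$. It gives
\[
\|u\ast\phi_n-\phi_n\|_{L^{1}}\to0 .
\]
The Fourier transform of $u\ast\phi_n-\phi_n$ is
\[
(F-1)\widehat{\phi_n}=:H_n .
\]
Pick $n$ so large that $\|u\ast\phi_n-\phi_n\|_{L^1}<1$. Then $H_n\in A(\mathbb{R})$ with $\|H_n\|_{A}<1$, where $\|\widehat{v}\|_A=\|v\|_{L^1}$.

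\emph{Neumann series.} Since $A(\mathbb{R})$ is a Banach algebra under pointwise multiplication (convolution on the $L^1$ side), the series $\sum_{k\ge0}(-1)^kH_n^k$ converges in $A(\mathbb{R})$ after adjoining the unit. More precisely,
\[
\sum_{k\ge1}(-1)^k H_n^k=\widehat{w}
\]
for some $w\in L^{1}$, because each $H_n^k$ with $k\ge1$ lies in $A(\mathbb{R})$ with norm at most $\|H_n\|_A^k$. Thus $1+\widehat{w}$ is the pointwise inverse of $1+H_n$ wherever the latter is defined.

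On $(-c/n,c/n)$ we have $\widehat{\phi_n}=1$, so there $1+H_n=F$. Hence $F\cdot(1+\widehat{w})=1$ on that interval, intersected with the neighborhood where $f=F$.

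\emph{Return to a global element.} To land in the global space $A(\mathbb{R})$, multiply by a cutoff $\chi\in\mathcal{D}$ that equals $1$ near $0$ and is supported in that interval; $\chi$ lies in $A(\mathbb{R})$. Then $\chi(1+\widehat{w})=\chi+\chi\widehat{w}\in A(\mathbb{R})$, and it coincides with $1/f$ near $0$. This shows $1/f\in A_{loc}(0)$.

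\emph{Main obstacle.} The only delicate step is obtaining $\|H_n\|_A<1$. Continuity of $F$ alone gives only sup-norm smallness of $F-1$ near $0$, not smallness in the Wiener norm. That gap is exactly what Lemma \ref{lemma korevaar} closes via the dilation $\phi_n$. The remaining steps are routine Banach algebra facts.
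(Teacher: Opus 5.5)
Your proof is correct: Lemma \ref{lemma korevaar} makes $(F-1)\widehat{\phi_n}$ small in the Wiener norm, the Neumann series inverts $1+H_n$, which equals $F$ near $0$, and the cutoff $\chi$ returns you to $A(\mathbb{R})$, a step that is genuinely needed because $1+\widehat{w}\notin A(\mathbb{R})$. The paper gives no proof of its own here and simply cites Korevaar's book; your argument is the standard proof of the division lemma built on that dilation lemma, which is presumably why the paper recalls the two results side by side.
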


\subsection{Controlled decrease: Tauberian conditions}

The controlled decrease notions from the next definition will act as Tauberian conditions in our considerations. 
\begin{definition}
    Let $S$ be a real valued function on $[0,\infty)$. It is called:
    \begin{enumerate}
    \item [(i)] \emph{log-linearly boundedly decreasing} if there are $\delta,x_0, M>0$ such that
    \[\frac{S(y)-S(x)}{e^{x}}\geq - M  \qquad \mbox{for } x \leq y \leq x+\delta \mbox{ and } x\geq x_{0} \:   ;
    \]
    \item [(ii)] \emph{strongly log-linearly slowly decreasing} if there is a non-increasing positive function $\eta$ satisfying $\eta(x)=o(1)$ as $x\to\infty$ and there are constants $\delta,x_0>0$ such that
      \begin{equation}\label{eq: slsd}
\frac{S(y) - S(x)}{e^{x}}\geq -\eta(x) \qquad \mbox{for } x \leq y \leq x+\delta \mbox{ and } x\geq x_{0}.
\end{equation}
    \end{enumerate}
  \end{definition}

Despite the wording, any non-decreasing function is the simplest example of a strongly log-linearly slowly decreasing function. Note that the log-linear slow decrease defined at the Introduction lies  between log-linear bounded decrease and strong log-linear slow decrease. We mention that log-linear bounded decrease was introduced in \cite{D-V2016} and plays an important role in the Wiener-Ikehara type characterization of the bound $S(x)=O(e^{x})$, which we state in the next lemma.

\begin{lemma}[{\cite[Proposition 3.1]{D-V2016}}] \label{thboundedness} Let $S\in L^{1}_{loc}[0,\infty)$. Then, 
 \begin{equation}
 \label{W-Ieq1}
 S(x) =O(e^{x}), \quad x\to\infty,
\end{equation}
if and only if $S$ is log-linearly boundedly decreasing, its Laplace transform converges for $\Re e \: s > 1$, and $
 \mathcal{L}\{S;s\} $  admits pseudomeasure boundary behavior at the point $s = 1$.
\end{lemma}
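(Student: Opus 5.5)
The plan is to handle the two directions of the equivalence separately. The necessity direction is easy. The sufficiency direction will use a Wiener-type convolution argument with a test kernel whose Fourier transform is supported near $0$.

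\emph{Necessity.} Assume $S(x)=O(e^{x})$, say $|S(x)|\le Ce^{x}$ for $x\ge x_0$.
\begin{itemize}
\item Log-linear bounded decrease holds trivially: $(S(y)-S(x))/e^{x}\ge -Ce^{\delta}-C$ for $x\le y\le x+\delta$.
\item The Laplace transform converges absolutely for $\Re e\: s>1$.
\item Put $T(x)=e^{-x}S(x)$ for $x\ge0$ and $T=0$ for $x<0$, so $T\in L^{\infty}(\R)$. Then $\mathcal{L}\{S;\sigma+it\}=\widehat{T_\sigma}(t)$ with $T_\sigma(x)=e^{-(\sigma-1)x}T(x)$. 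As $\sigma\to1^{+}$, $T_\sigma\to T$ boundedly and pointwise, hence in $\mathcal{S}'$.
\item It follows that the boundary value is $\widehat{T}$, and $\widehat{T}\in PM(\R)$ by the definition $PM=\mathcal{F}(L^\infty)$. This is even a global statement.
\end{itemize}

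\emph{Sufficiency.} Again set $T(x)=e^{-x}S(x)$ on $[0,\infty)$. By hypothesis the boundary value $g$ of $\mathcal{L}\{S;s\}$ coincides with some $\widehat{\tau}$, $\tau\in L^\infty$, on an interval $(-\lambda,\lambda)$.

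\begin{enumerate}
\item Fix a nonnegative $\phi\in\mathcal{S}(\R)$ with $\widehat\phi\in\mathcal{D}(-\lambda,\lambda)$, for instance a Fejér-type kernel $\phi=|\check\chi|^{2}$. Here $\check\chi$ denotes the inverse Fourier transform of a cutoff $\chi\in\mathcal{D}(-\lambda/2,\lambda/2)$; this choice makes $\widehat\phi=(2\pi)^{-1}\chi\ast\widetilde\chi$ with $\widetilde\chi(t)=\overline{\chi(-t)}$, so its support lies in $(-\lambda,\lambda)$. Arrange also $\phi>0$ on some interval $[-b,b]$.
\item Compute, for $\sigma>1$,
\[
\int_0^\infty e^{-(\sigma-1)x}T(x)\phi(h-x)\,\mathrm{d}x=\frac{1}{2\pi}\int G(\sigma+it)\widehat\phi(t)e^{iht}\,\mathrm{d}t,
\]
where $G=\mathcal{L}\{S;\cdot\}$. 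Let $\sigma\to1^{+}$. The right side tends to $\frac1{2\pi}\langle g(t),e^{iht}\widehat\phi(t)\rangle=\frac{1}{2\pi}\langle\widehat\tau,e^{iht}\widehat\phi\rangle$, which equals $(\tau\ast\phi)(h)$ up to normalization. This is bounded uniformly in $h$ because $\tau\in L^\infty$ and $\phi\in L^1$.
\item The left side should converge to $(T\ast\phi)(h)$. This needs some justification, since $T$ is not yet known to be bounded. I would first use bounded decrease to show $T\ast\phi$ is finite, or I would apply monotone convergence to the positive part and handle the remainder separately. The step is routine, and the result is $\sup_h |(T\ast\phi)(h)|<\infty$.
\item Pass from the smoothed bound to the pointwise bound using log-linear bounded decrease. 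For $x\le y\le x+\delta$ it gives $S(y)\ge S(x)-Me^{x}$, that is, $T(y)\ge e^{x-y}T(x)-Me^{x-y}$. Iterating over steps of length $\delta$ yields, for $y\in[x,x+2b]$,
\[
T(y)\ge c\,T(x)-M',
\]
with constants $c>0$ and $M'$ depending on $b,\delta,M$.
\item Take $h$ with $[h-b,h+b]$ located to the right of $x$ and within $2b$ of it. Then
\[
(T\ast\phi)(h)\ge \bigl(cT(x)-M'\bigr)\int_{-b}^{b}\phi-\int_{|u|>b}\phi(u)\,|T(h-u)|\,\mathrm{d}u .
\]
\end{enumerate}

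\emph{Main obstacle.} The tail term in step 5 is the hardest part: it requires an a priori growth bound on $T$. I would first derive a crude bound, $T^{-}$ is bounded and $T$ grows at most polynomially, from the decrease condition combined with convergence of the Laplace transform. I would then choose $\phi$ with rapid enough decay (Schwartz) that the tail is absorbed; for example $\int_{|u|>b}\phi(u)|T(h-u)|\,\mathrm{d}u$ is at most $\varepsilon\sup_{[0,h+b]}|T|$. A bootstrap over $\sup_{[0,X]}T$ then gives $\sup T<\infty$.

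The lower bound $T\ge -C$ follows by the symmetric argument. Here one takes $y$ below $x$: decrease gives $T(x)\le C'T(y)+M'$, which is used in the same way.

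Together these yield $|S(x)|\le C e^{x}$ eventually, which is \eqref{W-Ieq1}.
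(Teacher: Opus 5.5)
Your outline is right: trivial necessity; a kernel $\phi\ge0$ with $\widehat\phi$ supported where $g=\widehat\tau$, $\tau\in L^\infty$; uniform boundedness of $T\ast\phi$; then removing the smoothing via bounded decrease. The step you single out as the main obstacle, however, is resolved by an argument that fails. The estimate $\int_{|u|>b}\phi(u)|T(h-u)|\,\mathrm{d}u\le\varepsilon\sup_{[0,h+b]}|T|$ is false. For $u<-b$ the integrand samples $T$ on $(h+b,\infty)$, to the right of the window. A bootstrap on $\sup_{[0,X]}T$ therefore cannot close, since each step needs an upper bound for $T$ beyond $X$. A polynomial a priori bound would not rescue it either: it only makes that tail $O((1+h)^{N})$, not $O(1)$. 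Moreover, polynomial growth does not follow from bounded decrease and convergence of $\mathcal{L}\{S;s\}$. For instance, $S(x)=e^{x+\sqrt{x}}$ is non-decreasing, its Laplace transform converges for $\Re e\: s>1$, and $T(x)=e^{\sqrt{x}}$.

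What is missing is that no upper information on $T$ is needed in the tails. You even note that $T^{-}$ is bounded, but then discard this by passing to $|T|$. Iterating the bounded-decrease inequality from the fixed point $x_0$ gives $S(y)\ge S(x_0)-M\frac{e^{\delta}}{e^{\delta}-1}e^{y}$ for $y\ge x_0$, that is, $T\ge -C$ on $[x_0,\infty)$, with no use of the Laplace transform. Since $\phi\ge0$, the tail in step 5 is then bounded below by $-C\int_{|u|>b}\phi-\|\phi\|_{\infty}\int_{0}^{x_0}|T|$. Step 5 then gives $(e^{-2b}T(x)-M')\int_{-b}^{b}\phi\le\sup_h (T\ast\phi)(h)+O(1)$ for every $x\ge x_0$ with $T(x)\ge0$, with no bootstrap. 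The same one-sided bound makes step 3 rigorous: monotone convergence applied to $(T+C)\mathbf{1}_{[x_0,\infty)}$ identifies the limit of the left-hand side with $(T\ast\phi)(h)$. That quantity is then finite because it equals $(\tau\ast\phi)(h)$.

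This bound is also already the lower bound for $T$. Your ``symmetric argument'' for the lower bound states the inequality the wrong way round. Bounded decrease only bounds later values from below, $T(x)\ge e^{y-x}T(y)-M'$ for $x_0\le y\le x\le y+2b$, and gives nothing of the form $T(x)\le C'T(y)+M'$.

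A minor point in the necessity part: $T$ is bounded only on $[x_0,\infty)$. The piece of $S$ on $[0,x_0]$ has an entire Laplace transform and is harmless.
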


We shall also need the following iterated version of the inequality \eqref{eq: slsd}.
\begin{lemma}\label{lemma log-linear slow decrease} Let $S$ be strongly log-linearly slowly decreasing. There is a non-increasing positive function  $\eta(x)=o(1)$ and $x_0>0$ such that 
\begin{equation}
\label{eq: s slsd}
S(x+h)-S(x)\geq -\eta(x) e^{x}(1+he^{h} ) \qquad \mbox{for any } x\geq x_0 \mbox{ and }h\geq0.  
\end{equation}
\end{lemma}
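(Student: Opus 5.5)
We iterate the defining inequality \eqref{eq: slsd} along a chain of steps of length at most $\delta$, then absorb the constants into a rescaled $\eta$. Let $\eta_0$, $\delta$, $x_0$ be the data furnished by the definition of strong log-linear slow decrease, so that $S(y)-S(x)\geq -\eta_0(x)e^{x}$ whenever $x\leq y\leq x+\delta$ and $x\geq x_0$.

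Fix $x\geq x_0$ and $h\geq 0$, and let $n=\lceil h/\delta\rceil$. If $h\leq\delta$, a single application gives $S(x+h)-S(x)\geq -\eta_0(x)e^{x}$. Otherwise split $[x,x+h]$ into the points $x_j=x+jh/n$, $j=0,\dots,n$, so that each step has length $h/n\leq\delta$ and every $x_j\geq x\geq x_0$. Telescoping and using that $\eta_0$ is non-increasing gives
\[
S(x+h)-S(x)=\sum_{j=0}^{n-1}\big(S(x_{j+1})-S(x_j)\big)\geq -\eta_0(x)\sum_{j=0}^{n-1}e^{x_j}\geq -\eta_0(x)\,n\,e^{x+h}.
\]
Here we bounded each $e^{x_j}$ crudely by $e^{x+h}$, and the monotonicity of $\eta_0$ let us replace every $\eta_0(x_j)$ by $\eta_0(x)$.

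It remains to compare $n e^{h}$ with $1+he^{h}$. When $h>\delta$ we have $n\leq h/\delta+1< 2h/\delta$, so $n e^{h}\leq (2/\delta)he^{h}$. When $h\leq\delta$ the bound is simply $\eta_0(x)e^{x}$. In both cases
\[
S(x+h)-S(x)\geq -C\eta_0(x)e^{x}(1+he^{h}), \qquad C=\max\{1,2/\delta\}.
\]
Setting $\eta=C\eta_0$, which is still positive, non-increasing and $o(1)$, yields \eqref{eq: s slsd} with the same $x_0$. There is no real obstacle; the only point needing care is that all intermediate points lie to the right of $x\ge x_0$, so that the monotonicity of $\eta_0$ applies.
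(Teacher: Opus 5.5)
Your proof is correct and takes essentially the same route as the paper. Both arguments telescope the defining inequality over a chain of steps of length at most $\delta$ starting at $x\ge x_0$, use that $\eta$ is non-increasing to replace each $\eta(x_j)$ by $\eta(x)$, and absorb the resulting factor of order $h/\delta$ into a rescaled $\eta$. The differences are cosmetic: you use equal subdivisions with constant $\max\{1,2/\delta\}$, while the paper takes steps of exact length $\delta$ plus a remainder, assumes $\delta<1$, and bounds a geometric sum to obtain the constant $\delta^{-1}$.
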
 
\begin{proof}
Let $\delta$, $x_0$, and $\eta$ be as in \eqref{eq: slsd}. We may additionally assume that $\delta<1$. Find a non-negative integer $n$ such that $n\delta\leq h<(n+1)\delta$. Iterating \eqref{eq: slsd} and using that $\eta$ is non-increasing, we obtain
\[
S(x+h)  \geq S(x)-\eta(x) e^{x} - \eta(x)e^{x+h} \sum_{k=1}^{n}e^{-\delta k}\geq S(x)-\delta^{-1}\eta(x)e^{x}(1+ he^{h}). \]
So, the inequality \eqref{eq: s slsd} holds upon renaming $\delta^{-1}\eta(x)$ as $\eta(x)$.
\end{proof}

\section{The Tauberian theorem}\label{sect Tauberian theorem}
We are ready to state and prove our new Wiener-Ikehara type Tauberian theorem. 

\begin{theorem}\label{the Tth} Let $S\in L^{1}_{loc}[0,\infty)$ be strongly log-linearly slowly decreasing. Suppose that $\mathcal{L}\{S; s\}$ is convergent for $\Re e\: s>1$ and assume that for some $a>0$ the analytic function $G$ defined as \eqref{eq: Laplace extension W-I} admits distributional boundary value $g\in \mathcal{D}'(I)$ on the boundary line segment $1+iI$ for some real bounded open interval $I$ containing 0, that is,
  \begin{equation}
  \label{eq: taub 1}
  g(t)=G(1+it)=\lim_{\sigma\to1^+}G(\sigma+it)\qquad \text{in } \mathcal{D}'(I).
  \end{equation}  
If we can write $g=g_1\cdot g_2 +g_3$, where
    \begin{equation}
  \label{eq: taub 2}
    g_1\in A_{loc}(0) \mbox{ and }g_1(0)=0, \qquad g_2\in PM_{loc}(0), \qquad \mbox{and }  g_3\in PF_{loc}(0),
    \end{equation}
then $S$ satisfies the upper and lower bounds
\begin{equation}
\label{eq2: l-u bounds W-I}
e^{x}\ll S(x) \ll e^{x}, \qquad x\to\infty.
\end{equation}

\end{theorem}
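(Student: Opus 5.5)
The plan is to work with $\Delta(x)=e^{-x}S(x)$ for $x\geq0$ and $\Delta(x)=0$ for $x<0$. The upper bound in \eqref{eq2: l-u bounds W-I} comes from Lemma \ref{thboundedness}. The lower bound comes from testing $\Delta$ against dilated non-negative kernels with shrinking Fourier support, combined with Lemma \ref{lemma korevaar} and a Tauberian argument based on Lemma \ref{lemma log-linear slow decrease}.

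\emph{Upper bound.} A strongly log-linearly slowly decreasing function is in particular log-linearly boundedly decreasing. Near $t=0$, the boundary value of $\mathcal{L}\{S;s\}$ is $g(t)+a/(i(t-i0))$. The second term is the Fourier transform of $aH$, with $H$ the Heaviside function, so it is a global pseudomeasure. The term $g_1g_2$ lies in $PM_{loc}(0)$ by Lemma \ref{lemma mult}, and $g_3\in PF_{loc}(0)\subset PM_{loc}(0)$. Lemma \ref{thboundedness} then gives $S(x)=O(e^x)$. Consequently $\Delta$ is bounded, say $|\Delta|\leq M$ (modifying $S$ on a compact set if needed), hence tempered. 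Letting $\sigma\to1^+$, the Fourier transform of $\Delta-aH$ coincides with $g$ on $I$.

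\emph{Convolution estimate.} Choose $\psi\in\mathcal{S}(\R)$ with $\widehat\psi\in\mathcal{D}(\R)$, and set $K=c|\psi|^2$ with $c$ chosen so that $\int K=1$. Then $K\geq0$, $K\in L^1$ and $\widehat K$ has compact support. Put $K_n(x)=n^{-1}K(x/n)$, so that $\widehat{K_n}(t)=\widehat K(nt)$ is supported in a neighbourhood of $0$ that shrinks as $n\to\infty$. Fix a neighbourhood $V$ of $0$ on which the following hold simultaneously: $g_1=\widehat f$ with $f\in L^1$; $g_2=\widehat F$ with $F\in L^\infty$; and $g_3$ agrees with a global pseudofunction. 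For $n$ large, Parseval gives
\[
(\Delta\ast K_n)(x)-a(H\ast K_n)(x)=\frac{1}{2\pi}\bigl\langle g_1g_2+g_3,\,e^{ixt}\widehat K(nt)\bigr\rangle .
\]
The $g_3$ contribution is $o(1)$ as $x\to\infty$ for each fixed $n$, by the generalized Riemann–Lebesgue lemma. For the other term, $g_1\widehat{K_n}=\widehat{f\ast K_n}$, and by Lemma \ref{lemma korevaar} together with $\widehat f(0)=g_1(0)=0$ we get $\|f\ast K_n\|_{L^1}\to0$. Therefore $|\langle g_2, e^{ixt}\widehat{f\ast K_n}\rangle|\leq C\|F\|_\infty\|f\ast K_n\|_{L^1}=:\varepsilon_n\to0$, uniformly in $x$. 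Since $H\ast K_n(x)\to1$, we can fix $n$ so large that $\liminf_{x\to\infty}(\Delta\ast K_n)(x)\geq a/2$.

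\emph{Tauberian step.} Choose $R$ with $M\int_{|u|>R}K_n<a/8$. Then for large $x$,
\[
\int_{-R}^{R}\Delta(x-R-u)K_n(u)\,\mathrm{d}u\geq a/4 .
\]
Since $K_n\geq0$ has total mass at most $1$, there is some $y\in[x-2R,x]$ with $\Delta(y)\geq a/4$. Lemma \ref{lemma log-linear slow decrease} with $h=x-y\leq 2R$ gives $S(x)\geq S(y)-\eta(y)e^{y}(1+2Re^{2R})$. Hence
\[
\Delta(x)\geq e^{-2R}a/4-o(1),
\]
which proves $S(x)\gg e^x$.

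I expect the main obstacle to be the rigorous justification of the Parseval identity. This requires showing that the pairing of the boundary distribution $g$ with $e^{ixt}\widehat K(nt)$ really equals the convolution, via the limit $\sigma\to1^+$ and the tempered nature of $\Delta$. It also requires choosing the localizations of $g_1$, $g_2$ and $g_3$ consistently on a common neighbourhood $V$ that contains $\operatorname{supp}\widehat{K_n}$.
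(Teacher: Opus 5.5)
Your proposal is correct and follows essentially the same route as the paper. The upper bound comes from Lemma \ref{thboundedness}. For the lower bound, $e^{-x}S(x)$ is tested against dilated non-negative kernels with shrinking Fourier support: the $g_3$ term vanishes by the Riemann--Lebesgue property, the $g_1g_2$ term is made uniformly small through Lemma \ref{lemma korevaar} and $g_1(0)=0$, and the bound is then extracted with Lemma \ref{lemma log-linear slow decrease}.

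The differences are cosmetic: you give an explicit kernel $c|\psi|^2$, and you pick a point $y$ in the window with $e^{-y}S(y)\geq a/4$ and propagate forward, where the paper instead bounds the window integral by $S(y+u)$. The Parseval step you flag as the main obstacle is a standard fact that the paper simply cites.
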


\begin{proof} In what follows, we write $H$ for the Heaviside function, that is, the characteristic function of the interval $[0,\infty)$. Clearly, $\widehat{H}\in PM(\mathbb{R})=\mathcal{F}(L^{\infty}(\mathbb{R}))$. In view of Lemma \ref{lemma mult}, we obtain that $g\in PM_{loc}(0)$, so that $g+a\widehat{H}\in PM_{loc}(0)$. We then have
\begin{align*}
\lim_{\sigma\to 0^{+}} \mathcal{L}\{S; 1+\sigma+it\}&= g(t) + \lim_{\sigma\to0^{+}}\frac{a}{\sigma+it}\\
&= g(t) + a \lim_{\sigma\to0^{+}}\mathcal{L}\{H; \sigma+it\}
\\
&= g(t)+a  \widehat{H}(t) \qquad \mbox{in } \mathcal{D}'(I),
\end{align*}
 where in the last line we have used the well-known fact that the Fourier transform of a tempered distribution with support in $[0,\infty)$ coincides with the distributional boundary value of its Laplace transform on the imaginary axis \cite[Example~6.6.9, p.~100]{vladimirovbook}.
An application of Lemma \ref{thboundedness}  yields $S(x)=O(e^{x})$. Find $c>0$ such that
\begin{equation}
\label{proof Tt eq1}
S(x)\leq ce^{x}
\end{equation}
for  all large enough $x$. Since modifying $S$ on a finite interval does not change our hypothesis on the boundary behavior of its Laplace transform (the Laplace transform of a compactly supported function is an entire function \cite{vladimirovbook} and therefore a local pseudofunction on $1+i\mathbb{R}$), we may assume without loss of generality that \eqref{proof Tt eq1} holds for all $x>0$. 

It remains to establish the lower bound for $S(x)$. Replacing $I$ by a smaller interval if necessary, we may assume that $g_1\in A_{loc}(I)$, $g_2\in PM_{loc}(I),$ and  $g_3\in PF_{loc}(I).$

Choose a non-negative even test function $\phi\in\mathcal{S}(\mathbb{R})$ such that $\int_{-\infty}^{\infty}\phi(x)\mathrm{d}x=1$ and $\operatorname*{supp} \widehat{\phi}\subset I$. Furthermore, we consider the sequence of functions  $\phi_n$ given by \eqref{mollifier eq}. Their Fourier transforms $\widehat{\phi}_{n}(x)= \widehat{\phi}(nx)$ also satisfy $\operatorname*{supp} \widehat{\phi}_{n}\subset I$ and $\int_{-\infty}^{\infty}\phi_n(x)\mathrm{d}x=1$. Let $\varphi_n\in\mathcal{D}(I)$ be such that $\phi_n=\widehat{\varphi}_n$. 

We extend $S$ to be 0 on $(-\infty,0]$. We can regard $e^{-x}S(x)=O(1)$ as an element of $\mathcal{S}'(\mathbb{R})$. Using again that the distributional boundary value on the imaginary axis of a Laplace transform is  the Fourier transform, we conclude that the Fourier transform of $e^{-x}S(x)-aH(x)$ is given by $g$ on the interval $I$.
Hence, as $y\to\infty$,
    \begin{align*}
        \int_0^\infty e^{-x}S(x)\phi_n (x-y)\mathrm{d}x&= a \int_{-y}^\infty\phi_n (x)\mathrm{d}x+\langle g(t), e^{iyt}\varphi_n(t)\rangle\\
      &= a +o_{n}(1) + \langle g_3(t), e^{iyt}\varphi_n(t)\rangle + \langle g_1(t)\cdot g_2(t), e^{iyt}\varphi_n(t)\rangle, 
    \end{align*}
where we use the subscript $n$ to emphasize that by \emph{no means} the little $o$ term is meant to hold uniformly in $n$. By hypothesis, we also have $\langle g_3(t), e^{iyt}\varphi_n(t)\rangle=o_n(1)$. Summarizing,
\begin{equation}
\label{proof Tt eq2}
   \int_0^\infty e^{-x}S(x)\phi_n (x-y)\mathrm{d}x
    =a +o_{n}(1) + \langle g_1(t)\cdot g_2(t), e^{iyt}\varphi_n(t)\rangle, \quad y\to\infty.
\end{equation}

We must analyze the term $\langle g_1(t)\cdot g_2(t), e^{iyt}\varphi_n(t)\rangle$. Let $f_1\in L^{1}(\mathbb{R})$ and $f_{2}\in L^{\infty}(\mathbb{R})$ be such that $g_1=\widehat{f}_{1}$ and $g_2=\widehat{f}_{2}$ on an open interval containing $\operatorname*{supp}{\varphi_1}=\operatorname*{supp} \widehat{\phi}$. Since $\widehat{f}_{1}(0)=0$, Lemma \ref{lemma korevaar} yields
\begin{align*}
\lim_{n\to\infty}\sup_{y\in\R} |\langle g_1(t)\cdot g_2(t), e^{iyt}\varphi_n(t)\rangle|&=\lim_{n\to\infty}\left\| f_2\ast f_1 \ast \phi_n \right\|_{L^{\infty}(\R)}\\
&\leq \left\| f_2 \right\|_{L^{\infty}(\R)} \lim_{n\to\infty}\left\| f_1 \ast \phi_n \right\|_{L^{1}(\R)}=0.    
\end{align*}

Throughout the remaining of the proof, we pick $n$ large enough but fixed such that $\sup_{y\in\R} |\langle g_1(t)\cdot g_2(t), e^{iyt}\varphi_n(t)\rangle|\leq a/4$. Inserting this bound back into \eqref{proof Tt eq2} and choosing $y_1>0$ such that the term $o_n(1)$ in \eqref{proof Tt eq2} satisfies $|o_n(1)|\leq a/4$ for all $y\geq y_1$, we conclude that 
\begin{equation}
\label{proof Tt eq3}
   \int_0^\infty e^{-x}S(x)\phi_n (x-y)\mathrm{d}x\geq \frac{a}{2}, \qquad \mbox{for all }y\geq y_1.
\end{equation}

 The final step is to combine the Tauberian condition of strong log-linear slow decrease with the upper bound \eqref{proof Tt eq1} and the estimate \eqref{proof Tt eq3} in order to extract the desired lower bound in \eqref{eq2: l-u bounds W-I}. We first select a fixed $u>0$ such that 
 \begin{equation}
 \label{proof Tt eq4}
 \int_{|x|>u}\phi_n(x)\mathrm{d}x< \frac{a}{4c}.
 \end{equation}
 Let also $x_0>0$ and $\eta(x)=o(1)$ be as in Lemma \ref{lemma log-linear slow decrease} so that \eqref{eq: s slsd} holds. Keeping $y>\max\{y_1,u,x_0+u\}$ and employing \eqref{proof Tt eq1}, \eqref{proof Tt eq3}, \eqref{proof Tt eq4}, and \eqref{eq: s slsd}, we deduce that
     \begin{align*}
     \frac{a}{2}&\leq\int_0^\infty e^{-x}S(x)\phi_n(x-y)\mathrm{d}x
     \\
     &\leq c\int_{|x-y|\geq u}\phi_n(x-y)\mathrm{d}x + \int_{y-u}^{y+u} e^{-x}S(x)\phi_n(x-y)\mathrm{d}x
     \\
     & \leq  \frac{a}{4} + \left(e^{2u} \frac{S(y+u)}{e^{y+u}} + \eta(y-u)(1+2ue^{2u})\right)\int_{-u}^{u}\phi_{n}(x)\mathrm{d}x
     \\&\leq   \frac{a}{4} + e^{2u} \frac{S(y+u)}{e^{y+u}} + \eta(y-u)(1+2ue^{2u}).
     \end{align*}
Consequently,
\[\liminf_{x\to\infty} \frac{S(x)}{e^{x}}= \liminf_{y\to\infty} \frac{S(y+u)}{e^{y+u}} \geq \frac{a}{4e^{2u}}- (2u+e^{-2u}) \limsup_{y\to\infty}\eta(y-u)= \frac{a}{4e^{2u}}>0,\]
which completes the proof of \eqref{eq2: l-u bounds W-I}.
\end{proof}

\section{Application: Chebyshev bounds for Beurling primes}\label{sect Chebyshev bounds}
We shall now employ Theorem \ref{the Tth} to study Chebyshev bounds for Beurling primes. Theorem \ref{th CB} below provides sufficient conditions for \eqref{eq Chebyshev bounds} in terms of the Beurling zeta function of the number system,
\[ \zeta(s)= \int_{1^{-}}^{\infty}x^{-s} \mathrm{d}N(x).
\]

 As in classical analytic number theory, one verifies via elementary manipulations that the Chebyshev bounds \eqref{eq Chebyshev bounds} are equivalent to 
 \begin{equation}
 \label{eq: CB}
 x\ll \psi(x)\ll x.
 \end{equation} 
Here $\psi(x)$ stands for the Chebyshev function of the number system \cite{diamond-zhangbook}. We recall its Mellin-Stieltjes transform relates to the Beurling zeta function via the identity 
\begin{equation}\label{Chebyshev formula}
\int_{1}^{\infty} x^{-s}\mathrm{d}\psi(x)= -\frac{\zeta'(s)}{\zeta(s)}.
\end{equation}

\begin{theorem}
\label{th CB} Let $\zeta(s)$ be convergent for $\Re e\:s>1$. Suppose 
\[\zeta(s)=\frac{c}{(s-1)^{\rho}}+\sum_{j=1}^{k}\frac{P_{j}(\log(s-1))}{(s-1)^{\rho_{j}}} + F(s),\]
where 
\[ 
c>0, \qquad \rho>0, \qquad \rho>\rho_1>\dots> \rho_{k}, 
\]
 the $P_{j}$ are polynomials,
and the analytic function $F(s)$ has a continuous extension to some open boundary line segment containing $s=1$. If $F(1+it)\in A_{loc}(0)$ and\footnote{A word about the notation. Here $F'(1+it)$ stands for the distributional boundary value of $F'(s)$, which coincides with \[F'(1+it)= -i \odv{}{t} F(1+it) .\] The latter derivative should always be understood in the sense of Schwartz distributions \cite{estrada-kanwal,vladimirovbook}, as the continuous function $F(1+it)$ might not be differentiable in the classical sense and $F'(1+it)$ might thus not be a classical function.} $F'(1+it)\in PM(0)$, then  the Chebyshev bounds \eqref{eq: CB} hold true.
\end{theorem}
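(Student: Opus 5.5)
Set $S(x)=\psi(e^{x})$, which is non-decreasing, hence strongly log-linearly slowly decreasing. By \eqref{Chebyshev formula},
\[
\mathcal{L}\{S;s\}=\frac{1}{s}\int_1^\infty u^{-s}\,\mathrm{d}\psi(u)=-\frac{\zeta'(s)}{s\,\zeta(s)}, \qquad \Re e\: s>1 .
\]
The bounds \eqref{eq: CB} are exactly \eqref{eq2: l-u bounds W-I} for this $S$. So the plan is to verify the hypotheses of Theorem \ref{the Tth} with $a=\rho$. The factor $1/s$ is smooth and nonvanishing near $s=1$, so multiplication by it preserves every class in \eqref{eq: taub 2} (Lemma \ref{lemma mult}). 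Moreover $(1/s-1)\rho/(s-1)=-\rho/s$ is smooth. Hence it suffices to show that $-\zeta'/\zeta-\rho/(s-1)$ has boundary value of the form $g_1g_2+g_3$ near $t=0$.

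Write $\zeta(s)=c(s-1)^{-\rho}\,Z(s)$ with
\[
Z(s)=1+c^{-1}\sum_{j}P_j(\log(s-1))(s-1)^{\rho-\rho_j}+c^{-1}(s-1)^{\rho}F(s).
\]
Then
\[
-\frac{\zeta'}{\zeta}-\frac{\rho}{s-1}=-\frac{Z'}{Z}.
\]
Since $\rho-\rho_j>0$ and $\rho>0$, each correction term is continuous at $s=1$ and vanishes there, so $Z(1+it)\to1$ as $t\to0$. The main task is to show that:
\begin{itemize}
\item[(i)] $Z(1+it)\in A_{loc}(0)$, so that by Wiener's division lemma (Lemma \ref{Wiener lemma}) $1/Z(1+it)\in A_{loc}(0)$;
\item[(ii)] $Z'(1+it)$ decomposes as $g_1g_2+g_3$.
\end{itemize}

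For (i), the functions $t\mapsto (it)^{\alpha}\log^m(it)$ with $\alpha>0$ are locally in the Wiener algebra. Away from $0$ they are smooth, and at $0$ they are Hölder continuous of order below $\alpha$ with derivative $O(|t|^{\alpha-1-\epsilon})\in L^2_{loc}$ for $\alpha>1/2$. For small $\alpha$ I would use the standard fact that homogeneous-type functions $|t|^{\alpha}\log^m|t|$, suitably cut off, have integrable Fourier transforms, since their Fourier transforms behave like $|x|^{-1-\alpha}\log^m|x|$ at infinity and are integrable near $0$ after cutoff. Together with $F(1+it)\in A_{loc}(0)$ and Lemma \ref{lemma mult}, this gives $Z\in A_{loc}(0)$.

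For (ii), differentiate term by term. The term $(s-1)^{\rho}F'(s)$ has the form $g_1g_2$ with $g_1=(it)^{\rho}\in A_{loc}(0)$, $g_1(0)=0$, and $g_2=F'(1+it)\in PM_{loc}(0)$. The term $\rho(s-1)^{\rho-1}F(s)$ and the derivatives of the power-log terms, which are of the form $(s-1)^{\rho-\rho_j-1}\times(\text{polynomial in }\log(s-1))$, are problematic when the exponent is $\le -1$ or close to it. For exponents $>-1$ they are $L^1_{loc}$, hence in $PF_{loc}(0)$. If the exponent is $\le -1$, which happens when $\rho<1$, I would write $(s-1)^{\rho-1}F=(s-1)^{\rho-1}F(1)+(s-1)^{\rho-1}(F-F(1))$. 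The first piece is integrable when $\rho>0$. For the second I would instead factor out $(s-1)^{\rho}$ from $Z$ differently: rewrite $-Z'/Z=-\frac{\mathrm{d}}{\mathrm{d}s}\log Z$ and observe that $(it)^{\rho-1}$ is $L^1_{loc}$ for $\rho>0$, so $\rho(s-1)^{\rho-1}F$ is always $L^1_{loc}$ by the continuity of $F$. The same check applies to the power-log terms, whose derivatives carry exponent $\rho-\rho_j-1>-1$. All of these go into $g_3$.

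Finally, $-Z'/Z=(1/Z)\cdot(-Z')$. Multiplying by $1/Z\in A_{loc}(0)$ preserves $PF_{loc}$ for the $g_3$ part. For the other part, $(1/Z)(s-1)^{\rho}F'=\bigl[(it)^{\rho}/Z\bigr]\cdot F'$, and $g_1=(it)^\rho/Z\in A_{loc}(0)$ still satisfies $g_1(0)=0$. The hypotheses of Theorem \ref{the Tth} then hold, and \eqref{eq: CB} follows. I expect the main obstacle to be the careful verification that $(it)^{\alpha}\log^m(it)$, for arbitrary small $\alpha>0$, lies in $A_{loc}(0)$, together with justifying that the distributional boundary values of the differentiated terms agree with their pointwise $L^1_{loc}$ functions.
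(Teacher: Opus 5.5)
Your proposal is correct and follows the paper's proof essentially step for step. It uses the same $S(x)=\psi(e^{x})$ with $a=\rho$, and your $Z$ is the paper's $B(s)=(s-1)^{\rho}\zeta(s)$ divided by $c$; Wiener's division lemma gives $1/Z\in A_{loc}(0)$, and the splitting $g_1=(it)^{\rho}/Z$, $g_2=F'(1+it)$, $g_3=$ the $L^1_{loc}$ remainder is exactly the paper's. The only real difference is how you justify $(it)^{\alpha}\log^{m}(it)\in A_{loc}(0)$: the paper writes $(s-1)^{\alpha}$ as the Laplace transform of $x_+^{-\alpha-1}/\Gamma(-\alpha)$ and splits it at $x=1$ into an entire part plus the transform of an $L^1[1,\infty)$ function, a cleaner form of your Fourier-decay argument. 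Also, your remark that the exponent $\rho-1$ is $\le -1$ when $\rho<1$ is a slip, which you rightly abandon, since $(it)^{\rho-1}\in L^1_{loc}$ for every $\rho>0$.
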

\begin{proof}
We apply Theorem \ref{the Tth} to the non-decreasing function $S(x)=\psi(e^{x})$ with constant given by $a=\rho$. We have to study the boundary behavior of the analytic function \eqref{eq: Laplace extension W-I}, which, in view of \eqref{Chebyshev formula}, is given by
\begin{align*}
    G(s)&=\mathcal{L}\{\psi(e^u), s\}-\frac{\rho}{s-1}\\
        &=-\frac{\zeta'(s)}{s\zeta(s)}-\frac{\rho}{s-1}\\
        &=-\frac{\rho}{s}-\frac{1}{s}\odv{}{s} [\log \left((s-1)^{\rho}\zeta(s)\right)]
        \\
        &
       = -\frac{\rho}{s} - \frac{1}{s} \cdot \frac{B'(s)}{B(s)},
\end{align*}
with $B(s)=(s-1)^{\rho}\zeta(s)$. Since the boundary value of $1/s$ on $1+i\mathbb{R}$ is the non-vanishing smooth function $(1+it)^{-1}$, and smooth functions are both themselves local pseudofunctions and multipliers for each of the three spaces $A_{loc}$, $PM_{loc},$ and $PF_{loc}$, we see that $G(s)$ satisfies the hypotheses of Theorem \ref{the Tth} if and only if $B'(s)/B(s)$ does it. It therefore suffices to show that the distributional boundary value of the latter analytic function can be written as
\begin{equation}\label{decompose B}
\frac{B'(1+it)}{B(1+it)}= g_{1}(t)\cdot g_{2}(t)+g_{3}(t), 
\end{equation}
in some ($t$-)neighborhood of 0, where $g_1,$ $g_2,$ and $g_3$ satisfy \eqref{eq: taub 2}. 

We write
\[
B(s)=D(s)+(s-1)^{\rho} F(s),
\]
where
\[
D(s)= c+\sum_{j=1}^{k}P_{j}(\log (s-1))(s-1)^{\rho-\rho_{j}},
\]
so that
\begin{equation}\label{decompose B 2}
\frac{B'(s)}{B(s)}= \frac{D'(s)+ \rho(s-1)^{\rho-1}F(s)+ (s-1)^{\rho}F'(s)}{B(s)}.
\end{equation}
The distribution $B(1+it)\in\mathcal{D}'(\mathbb{R})$ is continuous near $t=0$ and $B(1)=c\neq 0$. 

Note that $s^{\alpha}$ is the Laplace transform of $x_{+}^{-\alpha-1}/\Gamma(-\alpha)$. The tempered distributions $x_{+}^{-\alpha-1}/\Gamma(-\alpha)$ are discussed in detail for instance in \cite[Section 2.4]{estrada-kanwal}, where it is essentially shown\footnote{One has $x_{+}^{-\alpha-1}= x^{-\alpha-1}H(x)\in L^{1}_{loc}(\mathbb{R})$ with $H$ the Heaviside function when $\Re e\:\alpha <0$; otherwise, $x_{+}^{-\alpha-1}$ are not regular distributions and arise from a regularization procedure at the origin as explained in \cite[Section 2.4]{estrada-kanwal}} that they form an entire family of distributions in the parameter $\alpha\in\mathbb{C}$. 
We define $J(s,\alpha)$ via
$$
(s-1)^{\alpha}= J(s,\alpha) +\frac{1}{\Gamma(-\alpha)}\int_{1}^{\infty} e^{-(s-1)x} x^{-1-\alpha}\mathrm{d}x.$$
Differentiating $n$ times with respect to $\alpha$,
 \begin{align*}(s-1)^{\alpha}\log^n (s-1)= \partial^{n}_{\alpha}&J(s,\alpha)
 \\
 & +\int_{1}^{\infty} e^{-(s-1)x}x^{-1-\alpha}\sum_{j=0}^{n}\binom{n}{j} \odv[order=n-j]{}{\alpha}\left(\frac{1}{\Gamma(-\alpha)}\right) (-\log x)^{j} \mathrm{d}x,
\end{align*}
whence we see that $\partial^{n}_{\alpha} J(s,\alpha)$ are Laplace transforms of distributions supported on the compact interval $[0,1]$. The Paley-Wiener-Schwartz theorem \cite{vladimirovbook} then implies that $\partial^{n}_{\alpha} J(s,\alpha)$ are  entire with respect to the variable $s$ as well. Therefore, 
$$\partial^{n}_{\alpha} J(1+it,\alpha)\in C^{\infty}(\mathbb{R})\subset A_{loc}(\mathbb{R}) \qquad \mbox{for each } \alpha\in \mathbb{C}.$$  If $\Re e\:\alpha>0$, we obviously have $x^{-1-\alpha} \log^{j}x \in L^{1}[1,\infty)$ and we infer that the boundary values of $(s-1)^{\alpha}$ on $1+i\R$ belong to the local Wiener algebra $A_{loc}(\mathbb{R})$.  We obtain 
 $D(1+it)\in A_{loc}(0)$ and consequently 
 \[B(1+it) = D(1+it) + F(1+it) \cdot \lim_{\sigma\to0^{+}} (\sigma+it)^{\rho} \in  A_{loc}(0).\] 
 (Here and below we repeatedly apply Lemma \ref{lemma mult}).  As we already pointed out, $B(1)\neq 0$. Lemma \ref{Wiener lemma} hence yields
 $$
 \frac{1}{B(1+it)} \in A_{loc}(0). $$
 We finally notice that  $D'(1+it)\in L_{loc}^{1}(\mathbb{R})\subset PF_{loc}(\mathbb{R})$, because 
 $$
 \lim_{\sigma\to0^{+}} (\sigma+it)^{\alpha}= e^{i\frac{\pi \alpha}{2}} t_{+}^{\alpha} + e^{-i\frac{\pi \alpha}{2}} t_{-}^{\alpha}\in L^{1}_{loc}(\mathbb{R})
 $$
 if $\Re e\:\alpha>-1$ (here $t_{\pm}^{\alpha}=|t|^{\alpha}H(\pm t)$,  where again $H$ stands for the Heaviside function).
  
  To conclude the proof, we decompose $B$ as in \eqref{decompose B} with (see \eqref{decompose B 2})
  \[
  g_{1}(t)=\frac{1}{B(1+it)}(e^{i\frac{\pi \rho}{2}} t_{+}^{\rho} + e^{-i\frac{\pi \rho}{2}} t_{-}^{\rho})\in A_{loc} (0)\cdot A_{loc} (0)= A_{loc} (0)  \qquad \mbox{with }g_1(0)=0,
  \]
  \[
  g_{2}(t)= F'(1+it) \in PM_{loc}(0),
  \]
  and 
  \[g_{3}(t)= \frac{D'(1+it)+\rho F(1+it) (e^{i\frac{\pi (\rho-1)}{2}} t_{+}^{\rho-1} + e^{-i\frac{\pi (\rho-1)}{2}} t_{-}^{\rho-1})}{B(1+it)} \in  L^{1}_{loc}(0) \subset PF_{loc}(0).
  \]
  
\end{proof}

Theorem \ref{eq N intro} can now be derived as a particular instance of Theorem \ref{th CB}. Indeed, extending $E$ as 0 at the left of $e$, the expression \eqref{eq N intro} yields \eqref{th CB} with certain polynomials $P_j$, $\rho_j=r_j+1$, $\rho=r+1>0$,  $c=a\Gamma(\rho)>0$, and $F(s)=F_{1}(s)+F_{2}(s)$ with $F_{2}(s)$ having analytic extension to a complex neighborhood of $s=1$, $F_{1}(s)= s\int_{1}^{\infty} f_{1}(x)e^{-(s-1)x}\mathrm{d}x$, and $f_1(x)=e^{-x}E(e^{x})$. The hypothesis \eqref{Ebound2} translates into $f_1\in L^{1}(\mathbb{R})$ and hence $F(1+it)=F_{2}(1+it) +(it+1) \widehat{f}_{1}(t)\in A_{loc}(0)$. On the other hand, if we set $f_{2}(x)=e^{-x}\int_{e}^{e^{x}} y^{-1}E(y)\log y\: \mathrm{d}y \in L^{\infty}(\mathbb{R})$ (by \eqref{Ebound3}),
\begin{align*}
F'(s)&=F'_{2}(s)+ \int_{1}^{\infty} f_{1}(x)e^{-(s-1)x}\mathrm{d}x-s\int_{e} ^{\infty} x^{-1-s}E(x) \log x \: \mathrm{d}x
\\
&= F'_{2}(s)+ \int_{1}^{\infty} f_{1}(x)e^{-(s-1)x}\mathrm{d}x-s^{2}\int_{1}^{\infty} f_{2}(x)e^{-(s-1)x}\mathrm{d}x.
\end{align*}
We thus see that $F'(1+it)= F_{2}'(1+it)+\widehat{f}_{1}(t)- (1+it)^{2}\widehat{f}_{2}(t) \in PM_{loc}(0)$.

We end this article with two remarks.

\begin{remark}
Let $\zeta(s)$ be convergent on $\Re e \:s>1$. Then, $\zeta(s)$ admits distributional boundary values in a open boundary line segment containing $s=1$ (or equivalently on the whole boundary line $\Re e\:s=1$) if and only if there is some $r>0$ such that
\begin{equation}
\label{eq: r1}
N(x)=O(x \log^ r x).
\end{equation}
In fact, it is obvious that \eqref{eq: r1} implies that $\zeta(1+it)$ is the tempered distribution given by the Fourier transform of $e^{-x}N(e^{x})\in \mathcal{S}'(\R)$. Conversely, if $\zeta$ has distributional boundary values on a boundary neighborhood of $s=1$, there is \cite[pp.~63--66]{hormanderbook} some $r>0$ and a compact neighborhood $I\subset \R$ of 0 such that 
\[
\zeta(1+\sigma+it)\ll \frac{1}{(\sigma+it)^{r}}, \qquad \sigma\to 0^{+},
\]
uniformly for $t\in I$. In particular,
\[
N(x)\leq x \int_{1^{-}} ^{x} \frac{\mathrm{d}N(u)}{u}\leq xe\int_{1^{-}} ^{x}\frac{\mathrm{d}N(u)}{u^{1+1/\log x}}\leq x e \zeta\left(1+\frac{1}{\log x}\right)\ll x\log^{r}x.
\]

\end{remark}

\begin{remark}The Chebyshev bounds under the hypotheses ($a>0$)
\begin{equation}
\label{rCBeq1}
\int_{1}^{\infty}\frac{|N(x)-a|}{x^{2}}{\mathrm{d}x}<\infty
\end{equation}
and 
\begin{equation}
\label{rCBeq2}
N(x)=ax+ O\left(\frac{x}{\log x}\right)
\end{equation}
were first proved in \cite{diamond-zhangbook} (cf. \cite{diamond-zhang13a,VindasChebyshevII}). Hall constructed \cite{hall} an example of a Beurling number system in which the Chebyshev bounds fail but for which $N(x)=ax +O(x/\log^{\alpha} x)$ when $0<\alpha<1$. This was subsequently improved by examples of Kahane \cite{kahane98} and Diamond and Zhang \cite{diamond-zhang13b} (cf. \cite[Chapter 12]{diamond-zhangbook}). Kahane showed that \eqref{rCBeq1} is not strong enough by itself to yield \eqref{eq: CB}, refuting a conjecture by Diamond; while Diamond and Zhang proved that one can always construct a generalized number system without Chebyshev bounds but satisfying \eqref{rCBeq1} and
$$
N(x)=ax +O\left(\frac{x \: \omega(x)  }{\log x}\right)
$$
where $\omega$ is a given arbitrary positive increasing unbounded function.

Whether the implication 
$$\mbox{\eqref{rCBeq2}}\implies \mbox{\eqref{eq: CB}}$$ (unconditionally)  holds true or not remains an important \emph{open question} in Beurling generalized number theory.
\end{remark}

\end{document}